\crefname{equation}{Eq.}{Eq.}
\definecolor{darkyellow}{rgb}{0.95, 0.73, 0.24}
\newcommand{\lipo}{\textsc{LIPO}\@\xspace}
\newcommand{\lipop}{\textsc{LIPO\texttt{+}}\@\xspace}
\newcommand{\adalipo}{\textsc{AdaLIPO}\@\xspace}
\newcommand{\adalipop}{\textsc{AdaLIPO\texttt{+}}\@\xspace}
\newcommand{\adalipob}{\textsc{AdaLIPO}\texttt{+}$|$ns\@\xspace}
\newcommand{\R}{\mathbb{R}}
\renewcommand{\P}{\mathbb{P}}
\newcommand{\Z}{\mathbb{Z}}
\newcommand{\cal}[1]{\mathcal{#1}}
\newcommand{\X}{\cal{X}}
\newcommand{\eg}{e.g.\@\xspace}
\newcommand{\ie}{i.e.\@\xspace}
\newcommand{\wrt}{w.r.t.\@\xspace}
\definecolor{mydarkblue}{rgb}{0,0.08,0.45}
\definecolor{mylightpurple}{rgb}{0.26, 0.32, 0.82}
\definecolor{mylightblue}{rgb}{0.26, 0.38, 0.93}
\newcounter{marginNoteCounter}
\newcommand{\etc}   		{etc.\xspace}
\newcommand{\bt}{\color[HTML]{26A96C}}
\begin{document}
%%
%% The "title" command has an optional parameter,
%% allowing the author to define a "short title" to be used in page headers.
%\title{Experimental Improvement of Global Optimization Algorithms for Lipschitz Functions}
\title{\lipop : Frugal Global Optimization for Lipschitz Functions}

%%
%% The "author" command and its associated commands are used to define
%% the authors and their affiliations.
%% Of note is the shared affiliation of the first two authors, and the
%% "authornote" and "authornotemark" commands
%% used to denote shared contribution to the research.
\author{Gaëtan Serré \quad Perceval Beja-Battais \quad Sophia Chirrane \\Argyris Kalogeratos \quad Nicolas Vayatis}
%\email{name.surname@ens-paris-saclay.fr}
\affiliation{%
\institution{École Normale Supérieure Paris-Saclay, Centre Borelli}
  \city{Gif-Sur-Yvette, 91190}
  \country{France}
}
\thanks{E-mail contacts:~name.surname@ens-paris-saclay.fr.%
%\\ Acknowledgment:~The authors acknowledge the support from the Industrial Data Analytics and Machine Learning Chair hosted at ENS Paris-Saclay.
}

%\author{Gaëtan Serré}
%\email{gaetan.serre@ens-paris-saclay.fr}
%\affiliation{%
  %\institution{École Normale Supérieure Paris-Saclay - Centre Borelli}
  %\city{Gif-Sur-Yvette}
  %\country{France}
%}
%\author{Perceval Beja-Battais}
%\email{perceval.beja-battais@ens-paris-saclay.fr}
%\affiliation{%
  %\institution{École Normale Supérieure Paris-Saclay - Centre Borelli}
  %\city{Gif-Sur-Yvette}
  %\country{France}
%}

%\authornote{École Normale Supérieure Paris-Saclay, Centre Borelli, Gif-Sur-Yvette, France}

%%
%% By default, the full list of authors will be used in the page
%% headers. Often, this list is too long, and will overlap
%% other information printed in the page headers. This command allows
%% the author to define a more concise list
%% of authors' names for this purpose.
\renewcommand{\shortauthors}{Serre et al.}

%%
%% The abstract is a short summary of the work to be presented in the
%% article.
\begin{abstract}
In this paper, we propose simple yet effective empirical improvements to the algorithms of the \lipo family, introduced in~\cite{Malherbe2017}, that we call \lipop and \adalipop. We compare our methods to the vanilla versions of the algorithms over standard benchmark functions and show that they converge significantly faster. Finally, we show that the \lipo family is very prone to the curse of dimensionality and tends quickly to Pure Random Search when the dimension increases. We give a proof for this, which is also formalized in Lean mathematical language. Source codes and a demo are provided online.
\end{abstract}

%%
%% The code below is generated by the tool at http://dl.acm.org/ccs.cfm.
%% Please copy and paste the code instead of the example below.
%%
\begin{CCSXML}
  <ccs2012>
  <concept>
  <concept_id>10002950.10003648.10003671</concept_id>
  <concept_desc>Mathematics of computing~Probabilistic algorithms</concept_desc>
  <concept_significance>300</concept_significance>
  </concept>
  <concept>
  <concept_id>10002950.10003705.10003707</concept_id>
  <concept_desc>Mathematics of computing~Solvers</concept_desc>
  <concept_significance>500</concept_significance>
  </concept>
  </ccs2012>
\end{CCSXML}

%%
%% Keywords. The author(s) should pick words that accurately describe
%% the work being presented. Separate the keywords with commas.
\keywords{global optimization, Lipschitz functions, frugal optimization, statistical analysis, numerical analysis.}

%\received{19 May 2024}
%\received[revised]{12 March 2009}
%\received[accepted]{5 June 2009}

%%
%% This command processes the author and affiliation and title
%% information and builds the first part of the formatted document.
\maketitle

\section{Introduction}
Global optimization methods aim at finding the global maxima of a non-convex function, unknown a priori, over a compact set. This branch of applied mathematics has been extensively studied since it has countless impactful applications. Indeed, optimizing an unknown function is a common problem in many fields such as machine learning, physics, biology, \etc (\eg \cite{Pinter1991, Lee2017}). In this context, only local information about the function is available. Moreover, in many applications, the function is computationally expensive to evaluate. The goal of any global optimization algorithm is to find a precise estimate of the global maximum while minimizing the number of evaluations of the function. For instance, imagine a physical system for which a heavy computer program needs days to simulate possible future trajectories given an initialization and a set of parameters. In such a scenario, without a plan for frugal probing of the function it may be infeasible to optimize the behavior of such a system, with respect to its parameters.

Several approaches have been proposed over the years. Some have theoretical guarantees (\eg \cite{Kirkpatrick1983, Malherbe2017, Rudi2024, Serre2024}) while others are more heuristic (\eg \cite{Hansen1996, Martinezcantin2014, Mirjalili2016, Xue2023}). Most are sequential, \ie they use the information of the previous evaluations to decide where to evaluate the function next, and stochastic, \ie they use a sampling process to explore the space. Recent results show that stochasticity is a key ingredient to achieve good performance (\eg \cite{Zhang2020, Davis2022, Jordan2023}).

In this short paper, we focus on the \lipo family of algorithms, namely the original \lipo algorithm and the adaptive variant \textsc{Ada-LIPO}, both introduced in \cite{Malherbe2017}. This is a family of sequential stochastic optimization algorithms that assume the target is a Lipschitz function (see \cref{eq:Lipschitz} and \cref{fig:Lipschitz}), which they exploit to ensure that algorithms' performance by theoretical guarantees. The Lipschitz assumption is common in many optimization frameworks, as it gives several tools to prove convergence rates of the algorithms, while being general enough to cover a wide range of functions. The contribution of this work is to propose two improved counterpart algorithms, called \lipop and \adalipop, by addressing certain practical limitations of the original versions and by introducing empirical modifications that work better in practice. Both the presented algorithms include a stopping criterion, while the adaptive variant is equipped with a decaying exploration rate. We compare the new algorithms to the vanilla versions on benchmark functions and validate their empirical performance. We show that the \lipo family of methods is very prone to the curse of dimensionality and tends quickly to Pure Random Search when the dimension increases. We give a proof for this and we also formalize it in Lean mathematical language \cite{Moura2021}, which ensures correctness and facilitates future reusability of theoretical results. Last but not least, our companion paper in \cite{Serre2024_IPOL} focuses on the reproducibility of this work, and makes available the pseudocodes, the implementation details, the source code of all the compared algorithms, and an online demo\footnote{\scriptsize Source code:~\url{https://github.com/gaetanserre/LIPO}\\ Demo:~\url{https://ipolcore.ipol.im/demo/clientApp/demo.html?id=469}.}.

\begin{figure*}[t]
  \begin{subfigure}{0.49\textwidth}
	\centering
	\includegraphics[width=0.8\columnwidth]{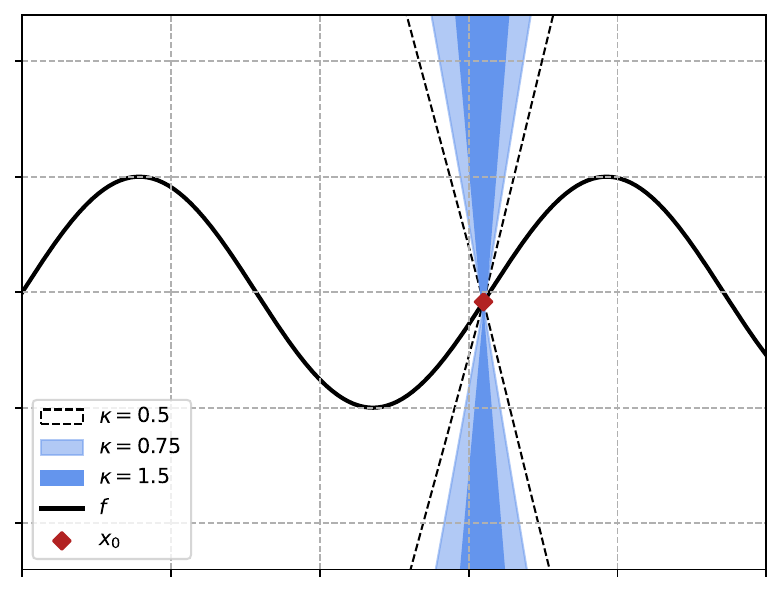}
  \caption{Lipschitz function}
  \label{fig:Lipschitz}
	\end{subfigure}
	\begin{subfigure}{0.49\textwidth}
	\centering
  \includegraphics[width=0.8\columnwidth]{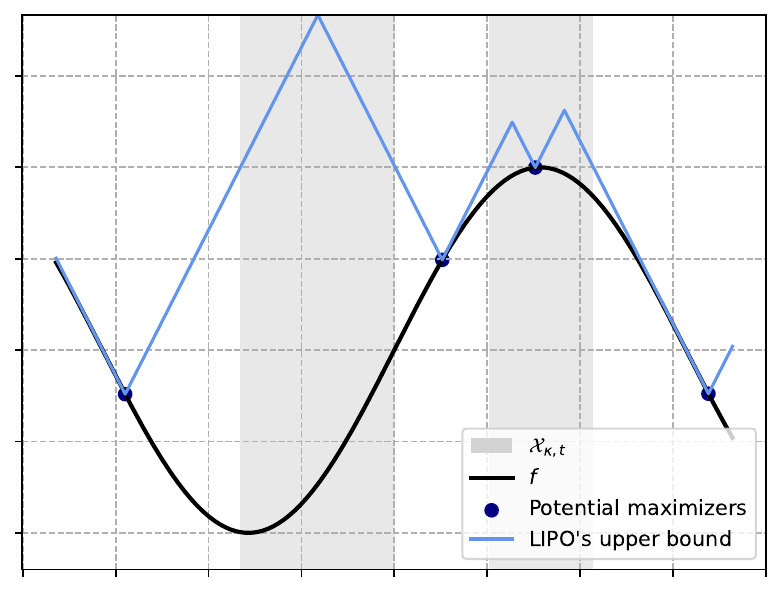}
  \caption{\lipo upper bound}
  \label{fig:ub-lipo}
	\end{subfigure}
	\caption{\textbf{a)}~An example of a Lipschitz function ($x \mapsto 0.5\sin x$). The {\color[HTML]{6495ed} hourglass-shaped double cones} are generated using a slope of $\pm \kappa$. As this function is $0.5$-Lipschitz, the graph of the function is always outside of cones generated by a $\kappa \geq 0.5$. The bigger $\kappa$, the thinner are the cones. \textbf{b)}~Example of an upper bound constructed by \lipo, given the potential maximizers $(X_i)_{1 \leq i < t}$. {\color{gray} $\cal{X}_{\kappa, t}$} is the regions of the domain where the potential maximizers can be found at time $t$, and therefore the regions in which $f$ is worth being probed. The upper bound ({\color[HTML]{6495ed} blue}) is a piecewise linear function with slope coefficient in $\{-\kappa, \kappa\}$ that passes through each potential maximizer.}
\end{figure*}

\section{Lipschitz Optimization}
In this section, we briefly recall the \lipo and \adalipo algorithms. For more details, we refer the reader to \cite{Malherbe2017}. Note that we refer to the \emph{maximization} of a function $f : \X \rightarrow \R$, yet this is only a convention since, due to the ordering property of real numbers, it is equivalent to \emph{minimizing} its negative, \ie $\text{arg\,max}_{x} f(x) = \text{arg\,min}_{x} -\!f(x)$. 

\subsection{LIPO}
Formally, a function $f$ is $\kappa$-Lipschitz in the domain $\X$ when it holds:
\begin{equation}\label{eq:Lipschitz}
\exists \kappa \geq 0, \quad |f(x) -f(x')| \leq \kappa \lVert x-x' \rVert_2 \quad\forall x, x' \in \X \subseteq\R^d.
\end{equation}
where $\kappa$ is the Lipschitz constant, and $d$ denotes the number of dimensions of the space. 
\lipo is a sequential and stochastic method designed to optimize a $\kappa$-Lipschitz function $f$ of known constant $\kappa$, over a domain $\X$ that is compact and convex subset of $\R^d$ with non-empty interior.
Suppose $(X_i)_{1 \leq i < t}$ is the sequence of the $t-1$ potential maximizers found so far.
At each iteration, \lipo samples a candidate point $x\in\X$ uniformly at random, and considers it as a potential maximizer iff:
\begin{equation}\label{eq:lipo}
\max _{1 \leq i < t} f\left(X_i\right) \leq \min _{1 \leq i < t} f \left(X_i\right) + \kappa \left\lVert x - X_i \right\rVert_2,
\end{equation}
By making use of the $\kappa$-Lipschitz property of $f$, \lipo constructs the upper bound of the function at $x$ appearing on the right-hand side of \cref{eq:lipo}, and in this way checks the potential of the the candidate point before evaluating the function at it.
The candidate is considered as a potential maximizer if the upper bound at $X_t$ is greater than the function value at the best maximizer found so far. If the candidate is accepted, \lipo evaluates $f$ at $x$, and the value is stored to compute \cref{eq:lipo} at the next iteration. The fact that $f$ is $\kappa$-Lipschitz ensures that the upper bound is valid and thus no global maximizer is rejected. The algorithm is consistent because the region of potential maximizers tightens around the global maximizers as the number of evaluations increases. \lipo, as well as \adalipo that we will see in the next subsection, are consistent over Lipschitz functions, with a convergence rate of at least $\mathcal{O} \big(t^{-1/d} \big)$ (with high probability), where $t$ is the number of actual function evaluations performed, or in other words the number of potential maximizers found. An example of the upper bound is given in \cref{fig:ub-lipo}. In a nutshell, \lipo maximizes $f$ while minimizing the number of function evaluations by evaluating $f$ only at candidates that are potential maximizers.

Note that, in %all that 
what follows, the number of iterations of \lipo is denoted by $t$, \ie the number of potential maximizers found so far and it should not be confused with the \emph{the number of sampled candidates}. As we assume that $f$ is costly, the number of time it is evaluated is a crucial measure to assess the performance of algorithms, and will be the only one used throughout this paper. The link between the number of samples and the number of potential maximizers is discussed in \cref{sec:limitations-lipo}.

\subsection{Adaptative LIPO}\label{sec:adalipo}
While \lipo is simple and efficient, the Lipschitz constant of $f$ needs to be known. This information is not available in general. As a response to that issue, the \adalipo variant \cite{Malherbe2017} alternates stochastically between two states: a state of exploration, where all candidates get accepted, and a state of exploitation, where an estimation of the Lipschitz constant is used to reject candidates with the \lipo upper bound of \cref{eq:lipo}. The Lipschitz constant at time $t$ is estimated using the follow formula:
$$
  \hat{\kappa}_t \triangleq \inf \left\{\kappa_{i \in \Z}: \max _{i \neq j} \frac{\left|f\left(X_i\right)-f\left(X_j\right)\right|}{\left\lVert X_i-X_j \right\rVert_2} \leq \kappa_i\right\},
$$
where $(\kappa_i)_{i \in \Z}$ can be any real-valued sequence. %In the following, we chose to use the same sequence as in \cite{Malherbe2017}
We use the sequence $\kappa_i = (1 + \alpha)^i$, with $\alpha \triangleq 0.01$, following the suggestion of \cite{Malherbe2017}. Thus, one can compute the closed-form expression of $\hat{\kappa}_t$:
\begin{align*}
  \hat{\kappa}_t &= (1 + \alpha)^{i_t}, \\
  &\text{where } i_t = \left \lceil \ln \left( \max_{i \neq j} \frac{|f(X_i) - f(X_j) |}{\|X_i - X_j\|_2} \right) \frac{1}{\ln (1 + \alpha)} \right \rceil.
\end{align*}
This computation ensures that $\kappa$ is not overestimated. Indeed, if there is $i$ such that $\kappa_i \leq \kappa \leq \kappa_{i + 1}$, then, at any time $t$ it holds $\hat{\kappa}_t \leq k_{i + 1}$ (see \cite{Malherbe2017}). At each iteration, \adalipo enters the exploration state with probability $p$, which is a fixed hyperparameter of the algorithm. By accepting any uniformly sampled candidates, the exploration state prevents the estimation of $\kappa$ from being locally biased by the potential maximizers region. This behavior allows the method to be consistent over Lipschitz functions.

\section{Empirical Improvements}

\subsection{Limitations of \lipo}\label{sec:limitations-lipo}
The two main drawbacks of the approach of \lipo are: i)~the calculation of the upper bound becomes computationally more expensive as $t$ grows large, and ii)~the uniform random sampling strategy across all the function domain $\X$. The latter implies that, when the region of potential maximizers gets to be small (see the gray-shaded region $\cal{X}_{\kappa, t}$ in \cref{fig:ub-lipo}), the algorithm needs a long time before it finds a candidate in that region. \cref{fig:time-samples} demonstrates this phenomenon.

To address these issues, we present the \lipop and \adalipop improved versions of the aforementioned algorithms, aiming at a better empirical performance. The first point of improvement is a stopping criterion, motivated by the discussion around \cref{fig:time-samples}, that allows the algorithm to stop when the number of samples required to find a potential maximizer is growing \emph{exponentially}. The second point of improvement concerns \adalipo, where we introduce a \emph{decaying probability} of entering the exploration state. This allows a faster convergence while restricting the approximation of the Lipschitz constant $\kappa$ to the region of interest.

\begin{figure}[tb]
  \centering
  \includegraphics[width=.8\columnwidth]{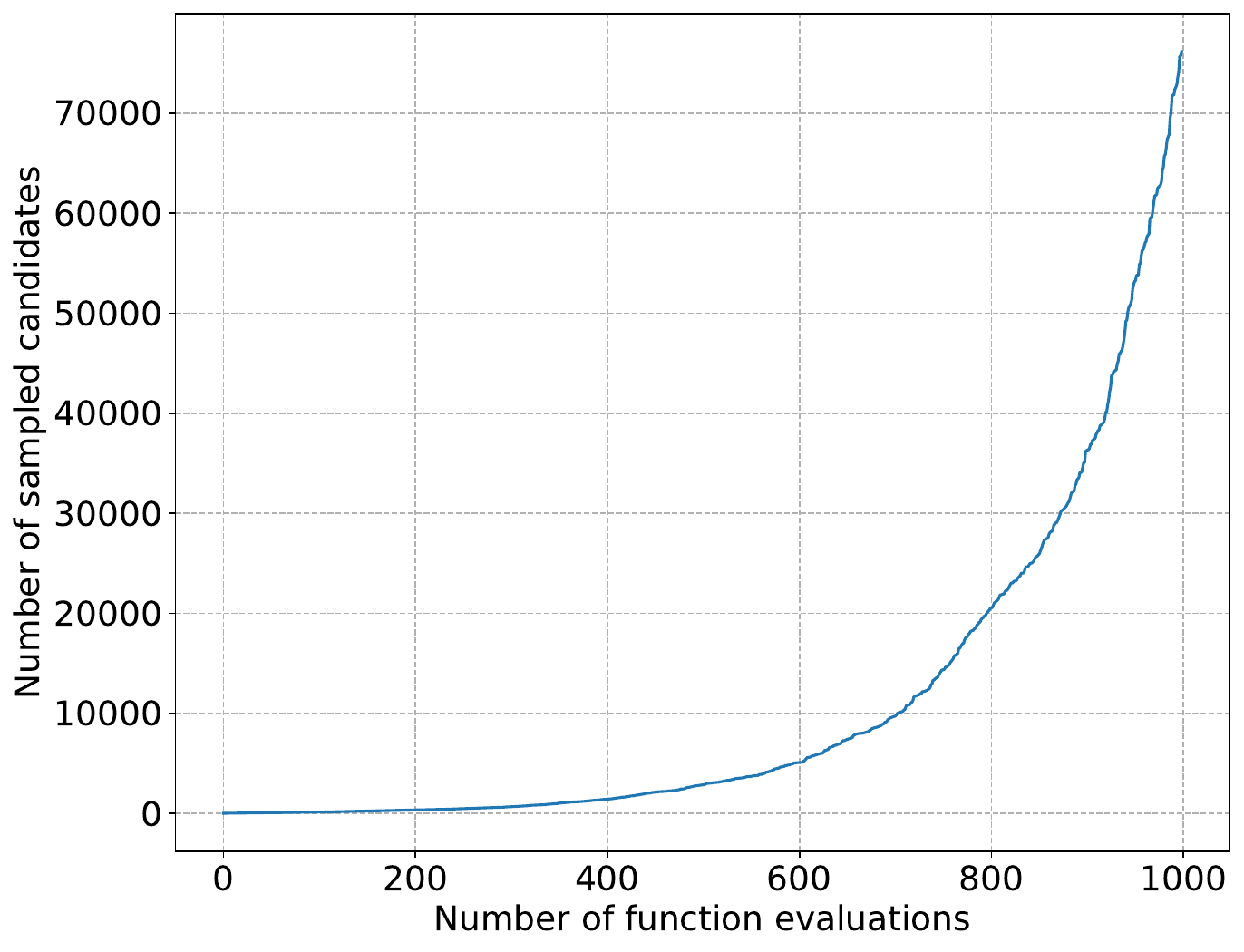}
  \caption{Number of sampled candidates required by \lipo vs number of potential maximizers (\ie evaluations of $f$ on the $x$-axis) for the Rastrigin function. The number of function evaluations corresponds to the number of candidates that satisfy \cref{eq:lipo}. It is evident that more and more candidates need to be sampled for finding the $t$-th potential maximizer as time $t$ passes.}
  \label{fig:time-samples}
\end{figure}

\subsection{Stopping Criterion}\label{sec:stopping-criterion}
As illustrated in \cref{fig:time-samples}, the number of samples required to find a potential maximizer seems to grow exponentially whenever the region of potential maximizers is small: the uniform random sampling has a low probability of finding a candidate in that region. This effect is unavoidable and will eventually happen over time, but the number of previous potential maximizers needed for reaching this state is unknown and depends on both the function and its domain. As it is hard to select a fixed number of evaluations as a stopping criterion, we propose to stop the algorithm whenever the slope of the function represented in \cref{fig:time-samples} exceeds a given threshold $\Delta$. The slope is computed over a window of size $w$. A bigger $\Delta$ value allows the algorithm to run longer, reaching to more precise final approximation.

\subsection{Decaying Exploration Rate}
As explained is \cref{sec:adalipo}, the transition between the exploration and the exploitation state is controlled by a Bernoulli random variable of fixed parameter $p$: $Y \sim \cal{B}(p)$. By intuition and experience, we can assume that exploring a lot at the beginning of the process is a good way to approximate correctly the Lipschitz constant $\kappa$ first, and favor more and more the exploitation state as the number of evaluations increases. We propose to take $p(t) = \min \big(1,\frac{1}{\ln(t)} \big)$, with the convention that $p(1) = 1$. As stated in \cref{sec:adalipo}, the exploration state allows the estimation of $\kappa$ to be unbiased. Our approach does not offer the same guarantee. Indeed, as the iterations increase, the probability of entering the exploration state decreases, and the estimation of $\kappa$ will be restricted to the region of potential maximizers, and hence \adalipop might highly underestimate the global $\kappa$. However, as the relative complement of this region \wrt to $\X$ is ignored by the algorithm, the estimation will be optimal over the current region of potential maximizers. We provide an illustration of the differences between the vanilla and our improved version in \cref{fig:adalipoe-visual}.

\begin{figure*}[t]
  \centering
  \begin{subfigure}{0.25\textwidth}
      \centering
      \includegraphics[width=\textwidth]{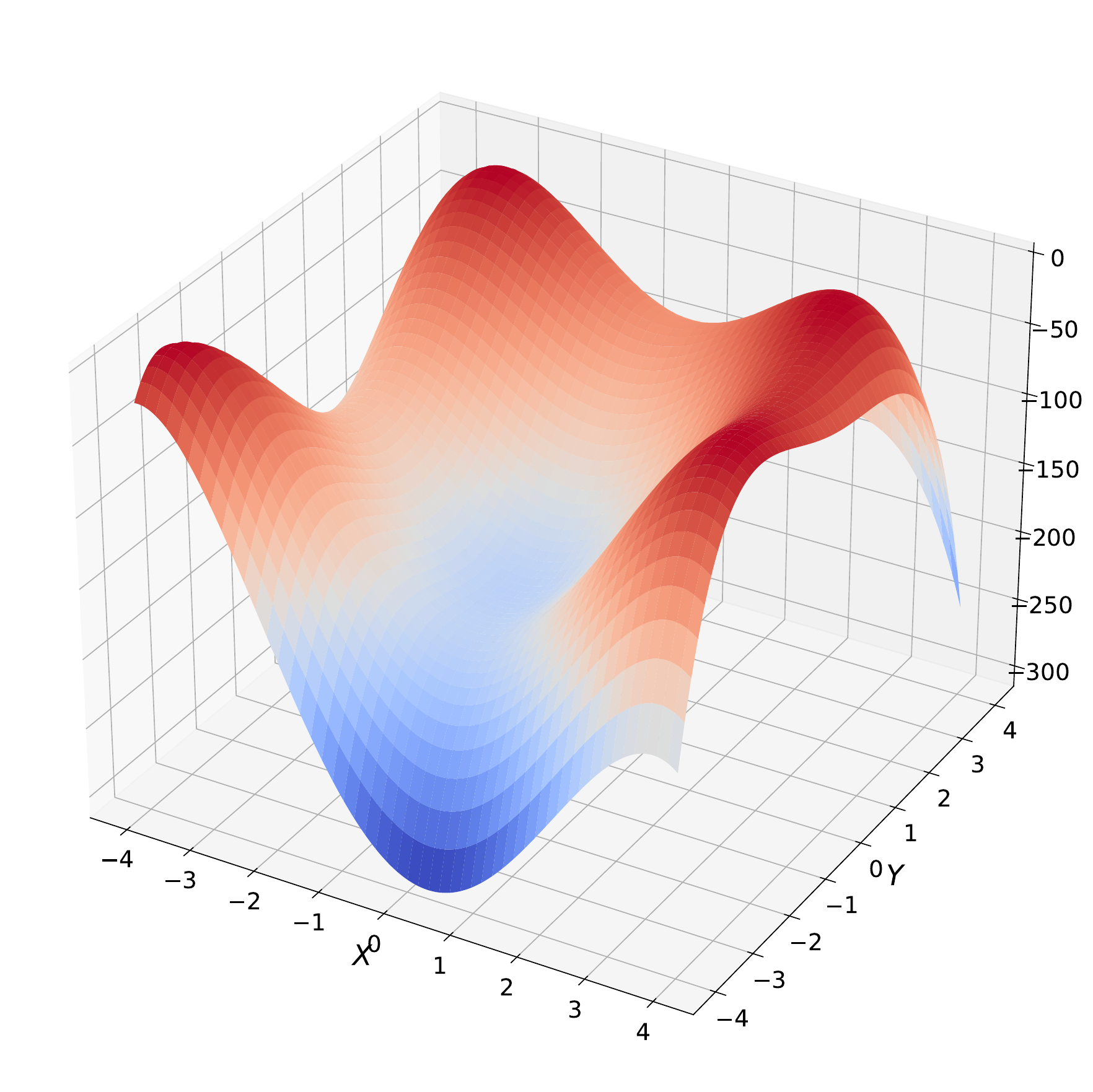}
      \caption{Himmelblau}
  \end{subfigure}
	\hspace{1em}
  \begin{subfigure}{0.25\textwidth}
      \centering
      \includegraphics[width=\textwidth]{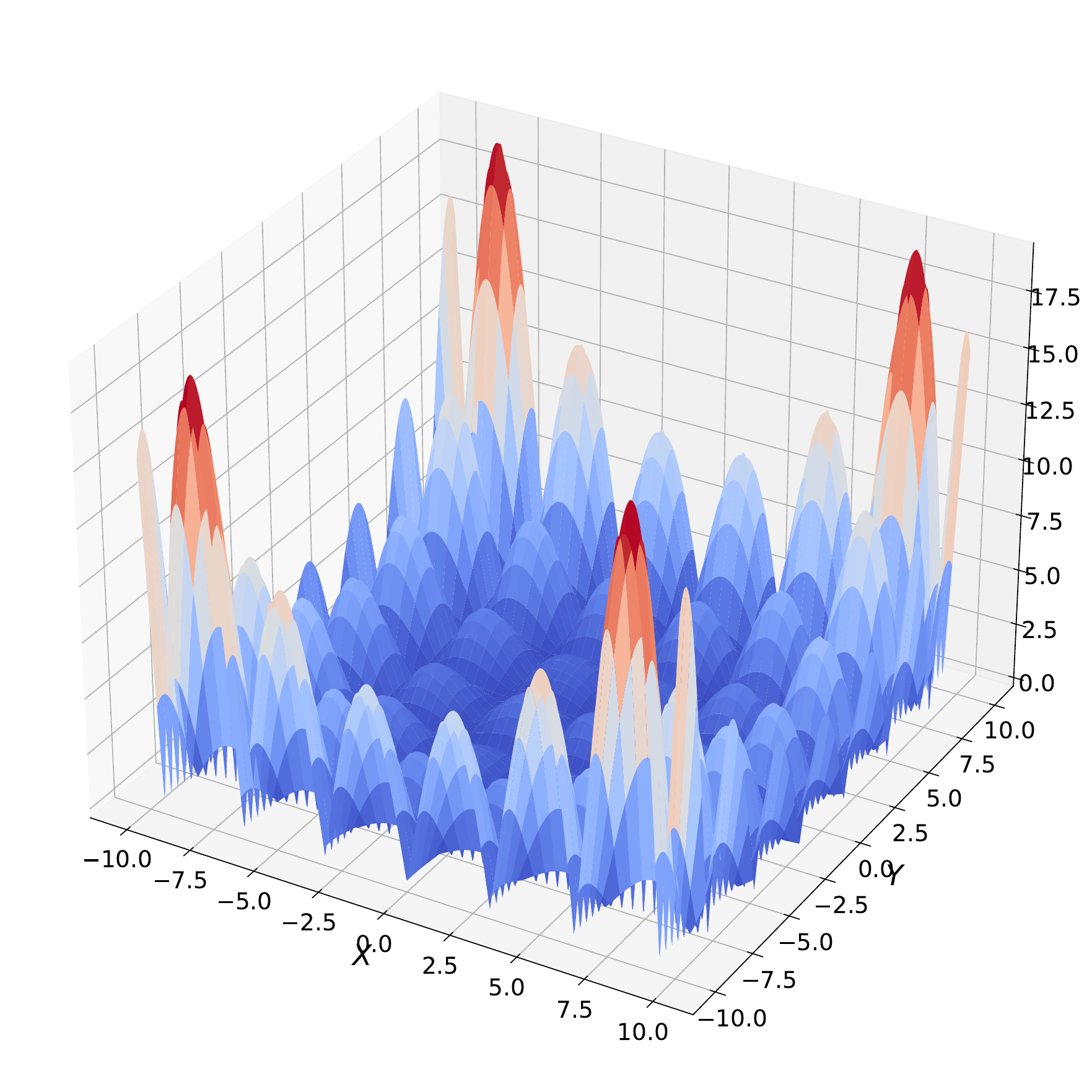}
      \caption{Hölder table}
  \end{subfigure}
  \hspace{1em}
	\begin{subfigure}{0.25\textwidth}
      \centering
      \includegraphics[width=\textwidth]{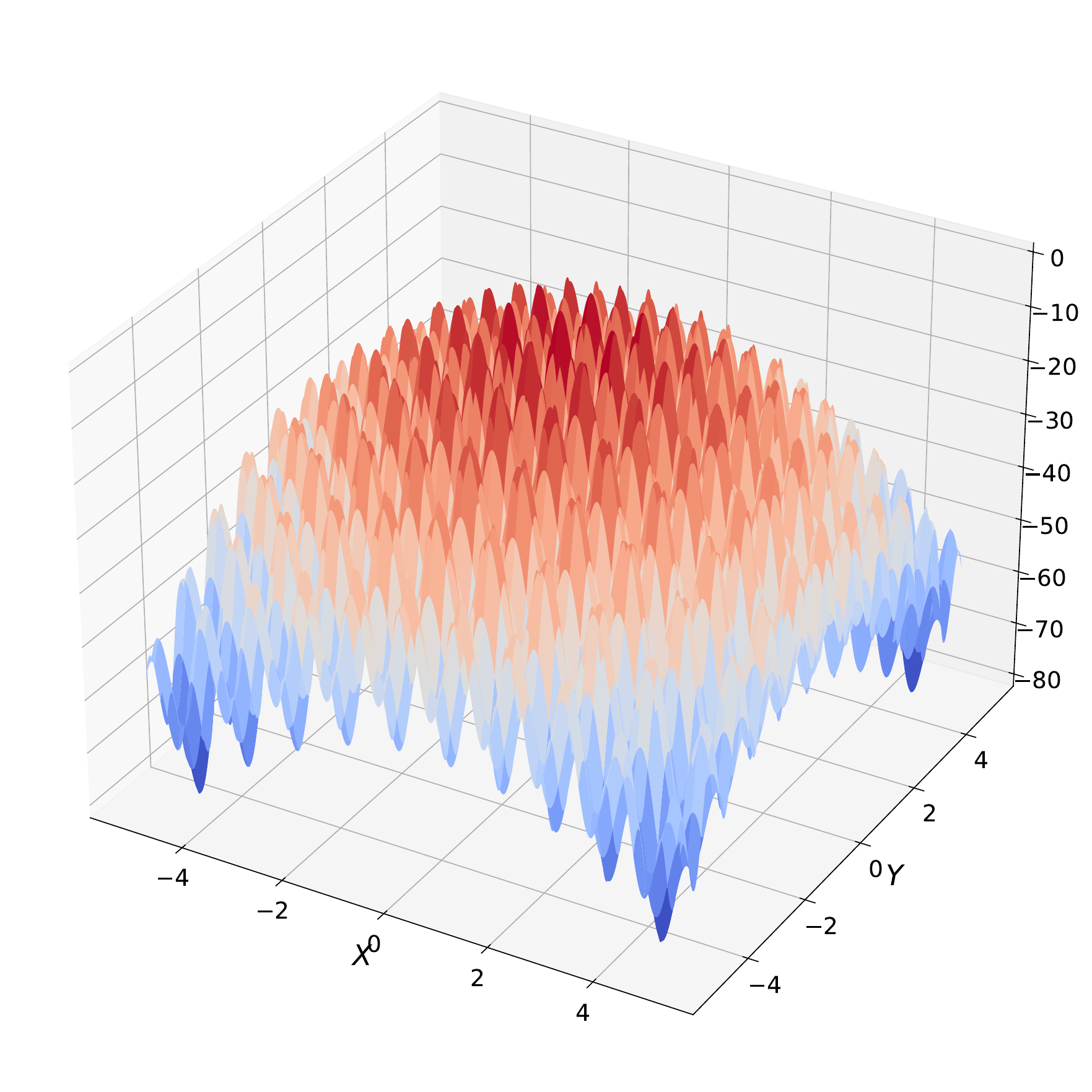}
      \caption{Rastrigin}
  \end{subfigure}
  \\
  \begin{subfigure}{0.25\textwidth}
      \centering
      \includegraphics[width=\textwidth]{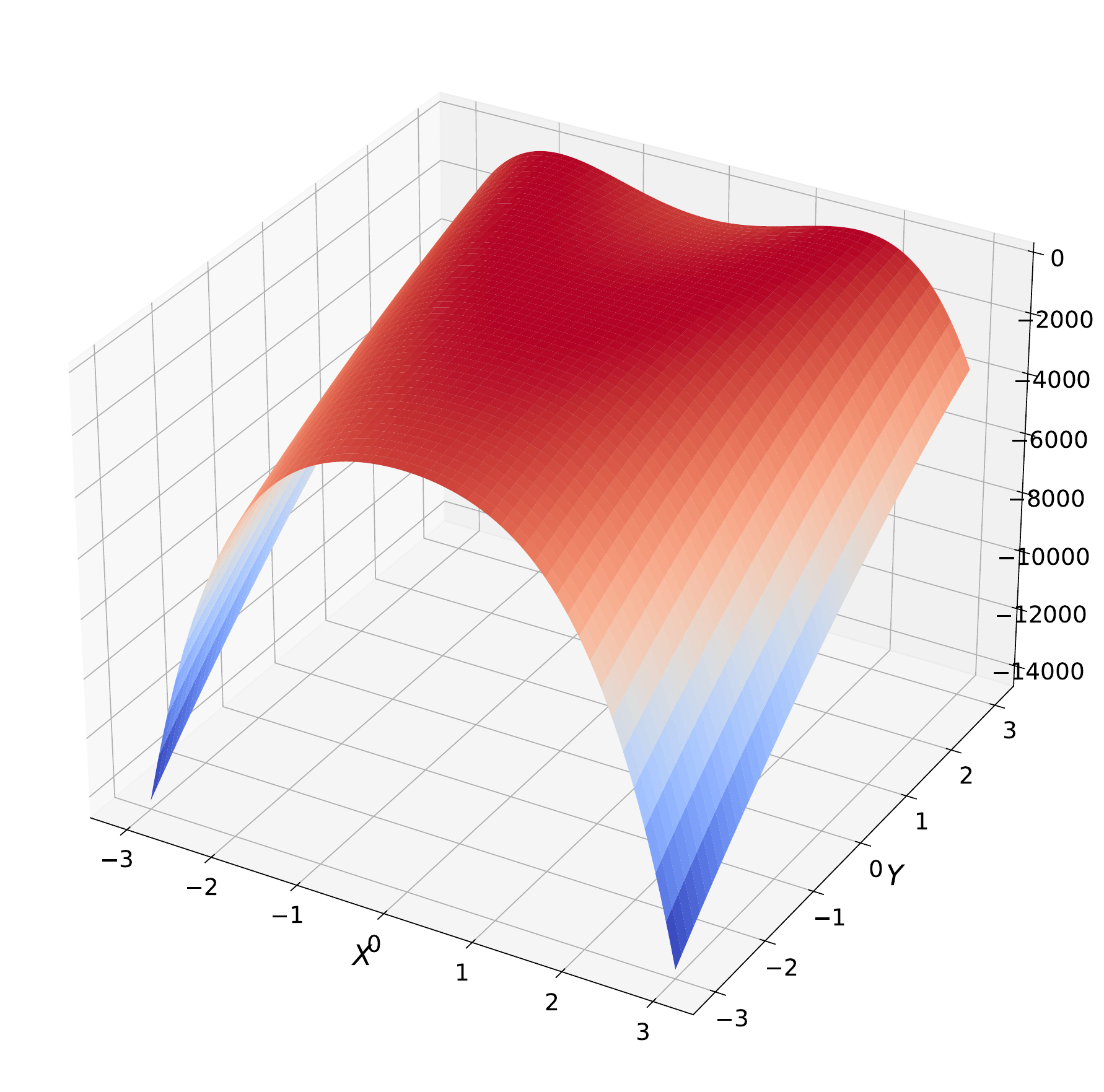}
      \caption{Rosenbrock}
  \end{subfigure}
	\hspace{1em}
  \begin{subfigure}{0.25\textwidth}
      \centering
      \includegraphics[width=\textwidth]{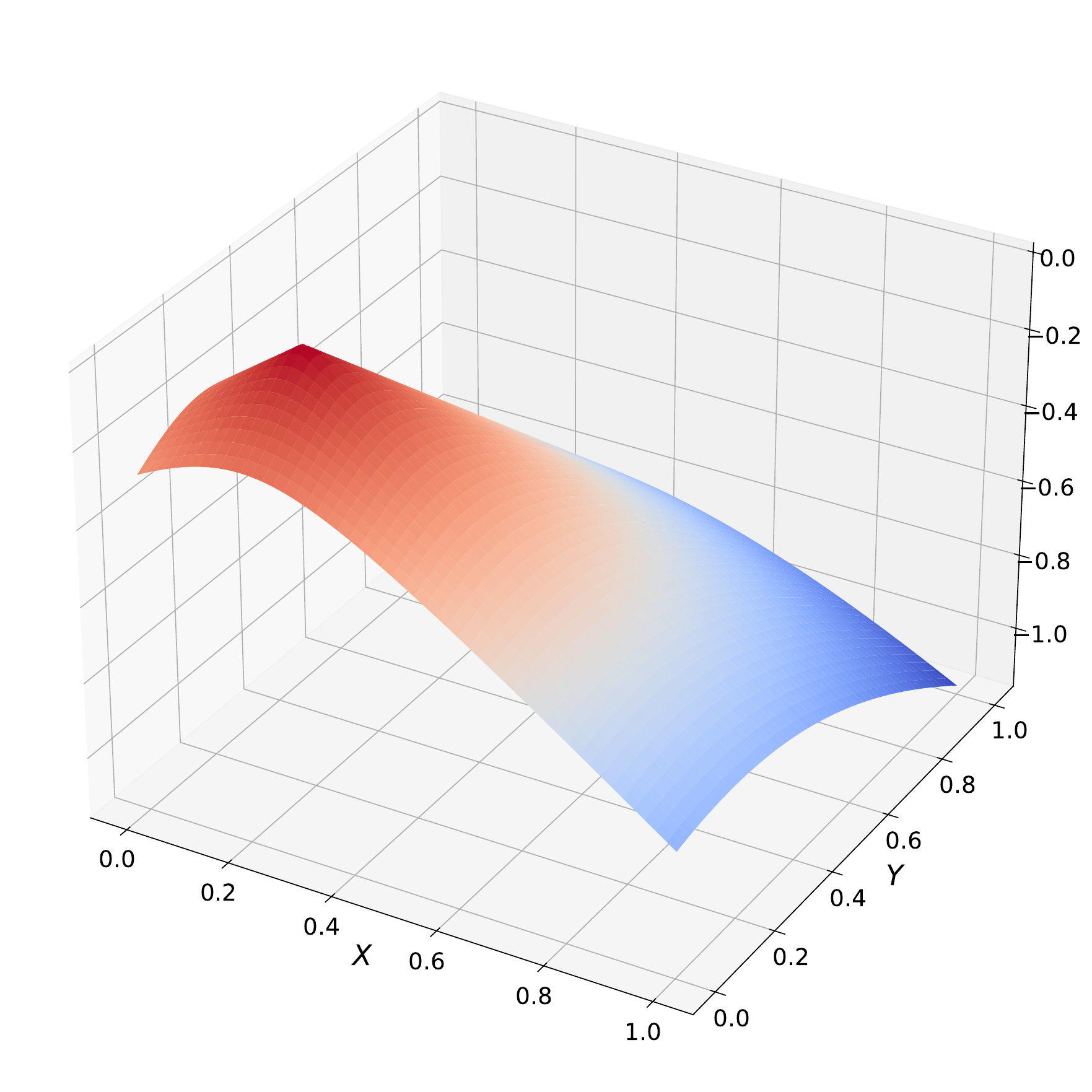}
      \caption{Sphere}
  \end{subfigure}
	\hspace{1em}
  \begin{subfigure}{0.25\textwidth}
      \centering
      \includegraphics[width=\textwidth]{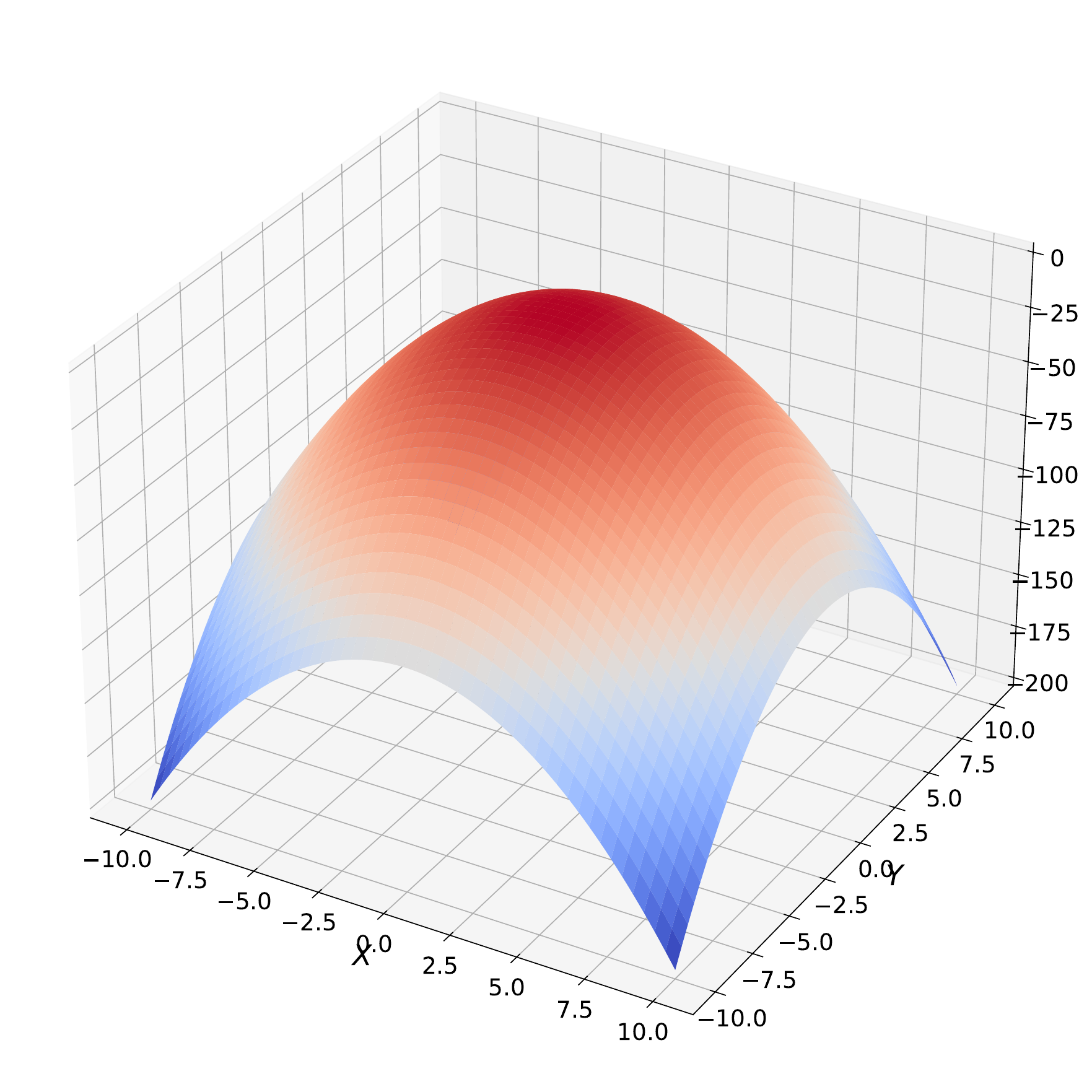}
      \caption{Square}
  \end{subfigure}
  \caption{Graphs of the chosen benchmark functions in 2D.}
	\label{fig:benchmark-functions}
\end{figure*}

\section{Experiments}
In this section, we compare the vanilla \lipo algorithms %of \lipo and \adalipo 
to ours, \lipop and \adalipop. %, over benchmark functions. 
We set $\Delta = 600$ and $w = 5$. We use standard benchmark functions used in global optimization literature \cite{simulationlib}. Some have few local minima (\eg the Sphere function) while others have many (\eg the Rastrigin function). See \cref{fig:benchmark-functions} for a visual representation of the functions. We set the dimension to $d = 2$ for all functions. We run each algorithm 100 times on each function and %record 
report the results in \cref{tab:exp}. The budget (maximum number of evaluations) depends on the function as some are easier to optimize than others. \lipo and \adalipo exhaust the entire budget, while \lipop and \adalipop %can stop 
may terminate earlier if the stopping criterion is met. One can see that, with significantly less evaluations, \lipop and \adalipop compete with the original version. 

In addition, we consider the \adalipob variant of \adalipop without using the stopping criterion defined in \cref{sec:stopping-criterion}. Same as the algorithms of the \lipo family, \adalipob stops when the budget is exhausted. To compare the performance of \adalipob to the original algorithms, we give them an infinite budget and stop them whenever the following condition is met: $g(\theta) \leq \max\limits_{1 \leq i < t} f(X_i)$, where $\theta \in [0, 1]$ a chosen threshold and
\begin{equation}\label{eq:cond_stop}
  g(\theta) = \max_{x \in \X} f(x) - \left( \max_{x \in \X} f(x) - \int_\X \frac{f(x)}{\lambda(\X)} \mathrm{d}x \right) (1 - \theta),
\end{equation}
where $\lambda$ is the standard Lebesgue measure that generalizes the notion of volume of any open set. This condition allows us to stop the algorithm whenever we consider it has reached close enough to the true maximum. The distance required is controlled by $\theta$: the closer to $1$, the smaller the distance. We set $\theta = 0.99$ for all the functions. The results are recorded in \cref{tab:exp2}. As one can see, \adalipob significantly outperforms the original on this benchmark. It even succeeds to beat \lipo on almost every problem, while knowing less information on the function. It corroborates the fact our decaying exploration rate is a good strategy to improve the empirical performance of \adalipo.

\begin{figure}[tb]
  \centering
  \begin{subfigure}{0.49\columnwidth}
    \centering
    \!\!\!\!\!\includegraphics[width=\textwidth]{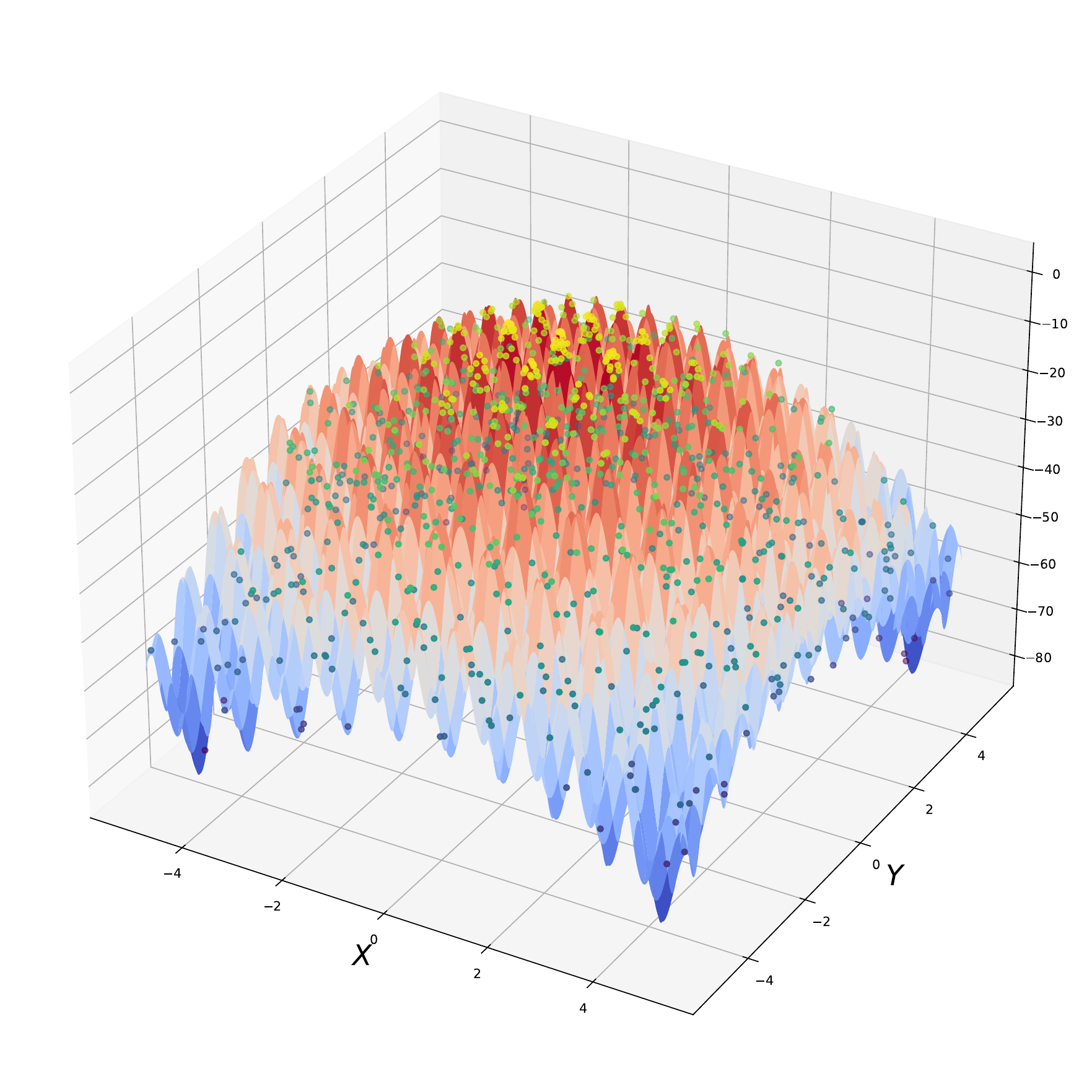}
    \caption{\adalipo}
  \end{subfigure}
  %\hspace{1em}
  \begin{subfigure}{0.49\columnwidth}
    \centering
    \ \ \includegraphics[width=\textwidth]{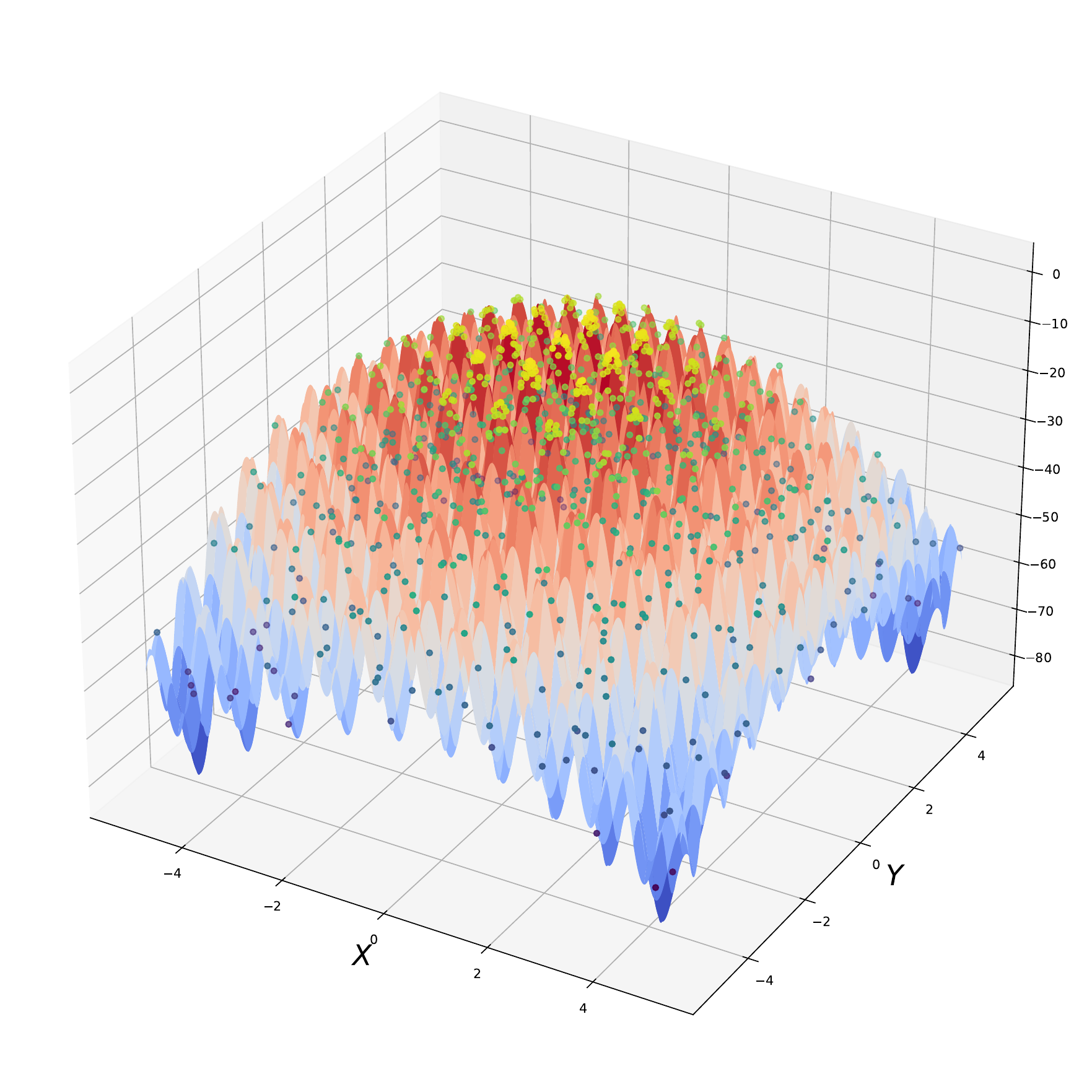}
    \caption{\adalipop}
  \end{subfigure}
  \caption{Visual understanding of the proposed improvements for \adalipo on the Rastrigin function, which has several local minima. The color shades represent the function value, from {\color[RGB]{58, 76, 192} blue} (low) to {\color[RGB]{179, 3, 38} red} or \textcolor{darkyellow}{yellow} (high). Comparing to \adalipo, the improved \adalipop not only it tends to be more restricted over the region of higher interest (which can be seen with a higher number of evaluations at the center in \textcolor{darkyellow}{yellow}), but it also reduces the number of function evaluations (dots).}
  \label{fig:adalipoe-visual}
\end{figure}

\renewcommand{\arraystretch}{1}
\setlength{\tabcolsep}{4pt} % Default value: 6pt
\begin{table}[t]
  \centering
  \fontsize{6.5pt}{6.5pt}\selectfont
  \caption{Empirical results comparing the original algorithms and the proposed improved versions. \#\,evals is the number of function evaluations (mean $\pm$ std), and $d_{\text{max}}$ is the distance from the real maximum. One can see that, with significantly less evaluations, \lipop and \adalipop compete with the original version in terms of distance to the real maximum.}%
	%\vspace{-1em}
  \begin{tabular}{c|cc|cc|cc}
       & \multicolumn{2}{c|}{Hölder} & \multicolumn{2}{c|}{Rastrigin} & \multicolumn{2}{c}{Sphere}\\
       & \#\,evals & $d_{\text{max}}$ & \#\,evals & $d_{\text{max}}$ & \#\,evals & $d_{\text{max}}$ \\
      \Xhline{2\arrayrulewidth} & & & & & & \\[-1.5ex]
      \lipo & 2000 & {\bt 0.0018} & 1000 & {\bt 0.0512} & 25 & 0.0306\\
      \lipop & 1505 $\pm$ 104 & {\bt 0.0018} & 869 $\pm$ 34 & 0.1282 & 25 & 0.0320\\
      \adalipo & 2000 & 0.003 & 1000 & 0.4106 & 25 & 0.0227\\
      \adalipop & {\bt 719 $\pm$ 457} & 0.023 & {\bt 753 $\pm$ 133} & 0.0569 & {\bt 20 $\pm$ 5} & {\bt 0.0063}\\
      \hline
  \end{tabular}
  \label{tab:exp}
\end{table}
\begin{table}[t]
  \centering
  %\fontsize{7.5pt}{7.5pt}\selectfont
  \caption{Empirical performance of our %enhanced AdaLIPO
	\adalipob variant. The table shows the total number of function evaluations (mean $\pm$ std) required to meet the condition stated in \cref{eq:cond_stop}. %AdaLIPO-B 
	\adalipob outperforms %LIPO and AdaLIPO 
	\lipo and \adalipo in almost %every 
	all problems.}%
	%\vspace{-1em}
  \begin{tabular}{c|c|c|c}
       &\lipo & \adalipo & \adalipob \\
       \Xhline{2\arrayrulewidth} & & & \\[-1.5ex]
       Himmelblau & 100 $\pm$ 86 & 97 $\pm$ 77 & {\bt 65 $\pm$ 46} \\
       Hölder & 508 $\pm$ 217 & 319 $\pm$ 201 & {\bt 228 $\pm$ 136} \\
       Rastrigin & 670 $\pm$ 183 & 913 $\pm$ 297 & {\bt 616 $\pm$ 187} \\
       Rosenbrock & {\bt 11 $\pm$ 10} & 12 $\pm$ 11 & {\bt 11 $\pm$ 10} \\
       Sphere & 46 $\pm$ 10 & 28 $\pm$ 8 & {\bt 22 $\pm$ 6} \\
       Square & {\bt 43 $\pm$ 22} & 62 $\pm$ 47 & 51 $\pm$ 36 \\
       \hline
  \end{tabular}
  \label{tab:exp2}
\end{table}

\section{Limitations}
A limitation of \lipop and \adalipop is that the original proofs of consistency provided in \cite{Malherbe2017} concerning the vanilla versions are not directly applicable. We do not provide any theoretical guarantees on the convergence of our methods. Since the consistency of the original algorithms is one of their key features of the \lipo family, this is a major drawback for the proposed improvements. Another limitation related to the curse of dimensionality is inherited from the vanilla approach. As the dimension increases, the volume of potential maximizers increases exponentially and thus, the probability of rejecting a point sampled uniformly in the domain decreases accordingly. We provide the following upper bound for \lipo, which also holds for the other algorithms:
\begin{theorem}[\lipo rejecting probability]\label{thm:lipo-reject-prob}
  For any $\kappa$-Lipschitz function $f$, let $(X_i)_{1 \leq i \leq t}$ be the previous potential maximizers of \lipo at time $t$. For any $x \in \X$, let $R(x, t)$ be the event of rejecting $x$ at time $t + 1$, \ie
  $$
  R(x, t+1) = \max_{1 \leq i \leq t} f(X_i) < \min_{1 \leq i \leq t} f(X_i) + \kappa \lVert x - X_i \rVert_2.
  $$
  We have the following upper bound:
  $$
  \P(R(x, t+1)) \leq \frac{t \pi^{d / 2}\Delta^d}{\kappa^d \Gamma(d/2 + 1) \lambda(\X)},
  $$
  where $\Delta = \max_{x \in \X} f(x) - \min_{x \in \X} f(x) := \textup{diam}(\X)$ is the diameter of the domain, $\lambda$ is the standard Lebesgue measure that generalizes the notion of volume of any open set, and $\Gamma$ is the extended factorial function (\ie the Gamma function) given by $\Gamma(x) = \int_0^\infty t^{x - 1} e^{-t} \mathrm{d} t$.
\end{theorem}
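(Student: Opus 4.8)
The plan is to turn the rejection event into a purely geometric statement about Euclidean balls and then bound the probability of a uniform candidate landing in those balls by a ratio of volumes. First I would unfold the definition of $R(x,t+1)$ as the negation of the acceptance rule \cref{eq:lipo}: $x$ is rejected precisely when the \lipo upper bound $\min_{1\le i\le t}\bigl(f(X_i)+\kappa\|x-X_i\|_2\bigr)$ falls strictly below the best observed value $\max_{1\le j\le t} f(X_j)$. Equivalently, there exists an index $i$ with $f(X_i)+\kappa\|x-X_i\|_2 < \max_{1\le j\le t} f(X_j)$, which rearranges to $\|x-X_i\|_2 < r_i$ with $r_i = \bigl(\max_{j} f(X_j) - f(X_i)\bigr)/\kappa$. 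Hence the rejection region is exactly the union of open balls $\bigcup_{i=1}^{t} B(X_i, r_i)$, and, taking the probability over the fresh uniform draw of the candidate $x$ with the $X_i$ held fixed, $\P(R(x,t+1)) = \lambda\bigl(\X \cap \bigcup_i B(X_i,r_i)\bigr)/\lambda(\X)$.

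The next step is to control the radii. Since each value $f(X_j)$ lies in $[\min_{z\in\X} f(z),\, \max_{z\in\X} f(z)]$, we get $r_i \le \bigl(\max_{z\in\X} f(z) - \min_{z\in\X} f(z)\bigr)/\kappa = \Delta/\kappa$ for every $i$. Thus each ball is contained in a ball of the common radius $\Delta/\kappa$, and I would apply subadditivity of the Lebesgue measure (the union bound) together with its translation invariance to obtain $\lambda\bigl(\bigcup_i B(X_i,r_i)\bigr) \le \sum_{i=1}^t \lambda\bigl(B(X_i,\Delta/\kappa)\bigr) = t\,\lambda\bigl(B(0,\Delta/\kappa)\bigr)$. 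Intersecting with $\X$ only decreases the measure, so bounding by the full ball volumes is legitimate.

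Finally I would substitute the closed-form volume of a $d$-dimensional Euclidean ball, $\lambda\bigl(B(0,r)\bigr) = \pi^{d/2} r^d / \Gamma(d/2+1)$ with $r = \Delta/\kappa$, and divide by $\lambda(\X)$ to recover the stated bound $\P(R(x,t+1)) \le t\,\pi^{d/2}\Delta^d/\bigl(\kappa^d\,\Gamma(d/2+1)\,\lambda(\X)\bigr)$. Because this holds for every fixed configuration of the potential maximizers $(X_i)$, it also holds unconditionally by taking the expectation over their joint distribution.

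I do not expect a serious obstacle, as the argument is essentially a union bound combined with the ball-volume formula. The two points that require care are: (i) unfolding the min/max structure of the acceptance rule correctly so that rejection becomes a \emph{union} of balls — a single failing index $i$ already forces rejection — which fixes the direction of the inequality defining $R$; and (ii) handling the fact that the balls $B(X_i,\Delta/\kappa)$ may protrude outside $\X$, which is harmless since intersecting with $\X$ can only shrink the measure while the translation-invariant ball volume remains a valid upper bound.
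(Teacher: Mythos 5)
Your proposal is correct and follows essentially the same route as the paper: rewrite the rejection event as membership in a union of balls $B\bigl(X_i, (\max_j f(X_j)-f(X_i))/\kappa\bigr)$, bound each radius by $\Delta/\kappa$, apply subadditivity with the closed-form ball volume $\pi^{d/2}r^d/\Gamma(d/2+1)$, and divide by $\lambda(\X)$. If anything, your explicit union-bound formulation is slightly cleaner than the paper's prose (which invokes a single argmin-witness ball and then speaks of ``$t$ disjoint balls'' as the worst case), and it matches the paper's own Lean formalization, which likewise expresses the rejected set as a union of balls indexed by the potential maximizers.
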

\begin{proof}
  At time $t$, a candidate $x \in \X$ is rejected iff it belongs to a ball within $\X$:
  $$
  \min_{1 \leq i < t} f(X_i) + \kappa \lVert x - X_i \rVert_2 < \max_{1 \leq i < t} f(X_i).
  $$
  Let $j$ be in the $\arg\min$ of the LHS of the above inequality. It is equivalent to
  \begin{align*}
    &f(X_j) + \kappa \lVert x - X_j \rVert_2 < \max_{1 \leq i < t} f(X_i) \\
    \iff & \kappa \lVert x - X_j \rVert < \max_{1 \leq i < t} f(X_i) - f(X_j) \\
    \iff & x \in B \left(X_j, \frac{\max_{1 \leq i < t} f(X_i) - f(X_j)}{\kappa} \right) \bigcap \X \\
    & \;\;\; \subseteq B \left(X_j, \frac{\max_{1 \leq i < t} f(X_i) - f(X_j)}{\kappa} \right).
  \end{align*}
  As $\text{diam}(f(\X)) = \Delta$, the volume of a ball of radius $\frac{\Delta}{\kappa}$ is an upper bound on the volume that can be removed from the region of potential maximizers, for any sequence of iterations $(X_i)_{1 \leq i < t}$. Thus, at time $t+1$, at most the volume of $t$ disjoint balls of radius $\frac{\Delta}{\kappa}$ may have been removed. This leads to the following lower bound on the volume in which potential maximizers should be seek:
  $$
  v_{t+1} \geq \lambda(\X) - \frac{t \pi^{d / 2}\Delta^d}{\kappa^d \Gamma(d/2 + 1)}.
  $$
  As \lipo samples candidates uniformly at random in $\X$, the probability of rejecting a candidate is bounded from above by the probability of sampling uniformly at a point in the union of the $t$ disjoint balls:
  $$
  \P(R(x, t + 1)) \leq \frac{t \pi^{d / 2}\Delta^d}{\kappa^d \Gamma(d/2 + 1) \lambda(\X)}.
  $$
  We provide a formalization of this proof in Lean \cite{Moura2021} and its mathematical library Mathlib \cite{Mathlib2020} in Appendix\,\ref{app:Lean-proof}.
\end{proof}

This upper bound tends extremely quickly to $0$ as the dimension increases. For instance, let us consider the function $x \mapsto e^{\lVert x \rVert_2}$ over $[-1, 1]^d$, let $C_d = \frac{\pi^{d / 2}\Delta^d}{\kappa^d \Gamma(d/2 + 1) \lambda(\X)}$. Then, $C_2 = 0.78$, $C_5 = 0.16$, $C_{10} = 0.002$, $C_{50} = 1.5 \times 10^{-28}$. This implies that \lipo and, by extension, the other algorithms, tend to Pure Random Search as the dimension increases, since they accept any candidate with a high probability.

\section{Conclusion}
In this paper, we proposed simple yet effective empirical improvements to the algorithms of the \lipo family, which we respectively call name \lipop and \adalipop. We showed experimentally that our methods converge significantly faster than the vanilla versions, and hence they are more suitable for frugal optimization problems over Lipschitz functions. Our methods ship two major limitations: the lack of theoretical guarantees compared to the original algorithms, while it inherits from them the a limitation related to the curse of dimensionality. For the latter, we provided an upper bound on the probability of rejecting a candidate for \lipo, which tends very quickly to $0$ as the dimension increases.

\begin{acks}
This work was supported by the Industrial Data Analytics and Machine Learning Chair hosted at ENS Paris-Saclay.
\end{acks}

\bibliographystyle{ACM-Reference-Format}
\bibliography{refs}

% \newpage
% \onecolumn
\appendix

\section{Formalization of Theorem 5.1}\label{app:Lean-proof}
Lean~\cite{Moura2021} is a programming language that facilitates the writing of mathematical proofs. The typechecker of Lean ensures that the proof is correct. The mathematical library Mathlib~\cite{Mathlib2020} provides a wide range of mathematical tools and theorems that can be used to prove other mathematical statements. We provide a formalization of the proof of \cref{thm:lipo-reject-prob} in Lean using Mathlib. In this section, we only provide the definition of the main objects and the statement of the main theorems. The complete code is available online\footnote{\scriptsize Source code:~\url{https://github.com/gaetanserre/Lean-LIPO}.}.

\subsection{Definitions}
We first define the dimension of the space $d$, such that $0 < d$.
\begin{figure}[H]
  \centering
  \includegraphics{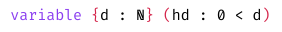}
\end{figure}
We also need to define our search space $\X \subset \R^d$. Note that it is not required to be compact, but only to be approximated by a measurable set, up to a null measure set.
\begin{figure}[H]
  \centering
  \includegraphics{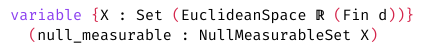}
\end{figure}
This allows us to define the uniform measure over our space.
\begin{figure}[H]
  \centering
  \includegraphics{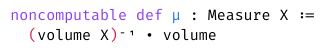}
\end{figure}
Then, we define the function to be optimized $f : \X \to \R$ where the sets of $\arg\max$ and $\arg\min$ of $f$ are supposed to be non-empty.
\begin{figure}[H]
  \centering
  \includegraphics{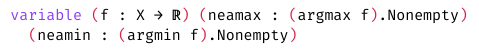}
\end{figure}
Note that this is a slightly more general framework than the one presented in the paper, where $f$ is continuous and defined over a compact (which implies the properties of the above definition).

Next, for a given set of potential maximizers $A$ and a Lipschitz constant $\kappa$, we define the event ``\emph{a candidate $x$ is being rejected by \lipo\!\!}".
\begin{figure}[H]
  \centering
  \includegraphics{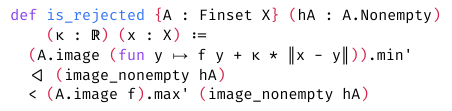}
\end{figure}
This allows us to define the set of all rejected candidates, given a set of potential maximizers $A$ and a Lipschitz constant $\kappa$.
\begin{figure}[H]
  \centering
  \includegraphics{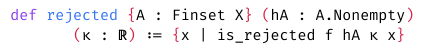}
\end{figure}
We define the diameter of the image of $f$ as $f(x) - f(y)$, for any $x \in \arg\max_{x \in A} f(x)$ and $y \in \arg\min_{x \in A} f(x)$.
\begin{figure}[H]
  \centering
  \includegraphics{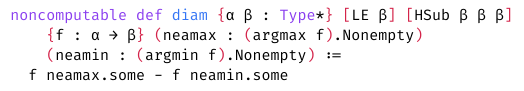}
\end{figure}
Finally, we define the volume of a ball of radius $\frac{\text{diam}}{\kappa}$.
\begin{figure}[H]
  \centering
  \includegraphics{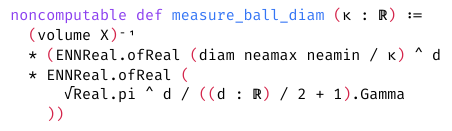}
\end{figure}

\subsection{Theorems}
We prove that, given a set of potential maximizers $A$ and a Lipschitz constant $\kappa$, a candidate $x$ is rejected by \lipo iff there exists a point $x'$ in $A$ such that $x \in B\left(x', \frac{\max_{y \in A} f(y) - f(x')}{\kappa}\right)$.
\begin{figure}[H]
  \centering
  \includegraphics{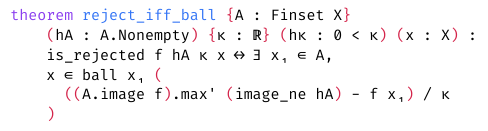}
\end{figure}
This allows us to prove that the set of all rejected candidates is equal to the union indexed by $A$ of balls defined as above.
\begin{figure}[H]
  \centering
  \includegraphics{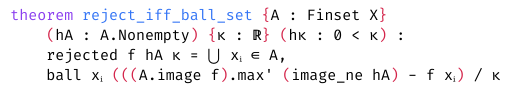}
\end{figure}
Finally, using classical results on restricted measure, on the volume of balls in Euclidean space, and the fact that the diameter is bigger than any distance between two points in the image of $f$, we can prove \cref{thm:lipo-reject-prob}.
\begin{figure}[H]
  \centering
  \includegraphics{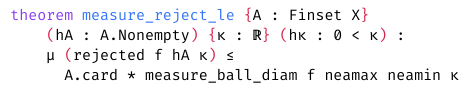}
\end{figure}

\end{document}